\def\GZZ{\mathbb{GZ}}
\def\QQ{\mathbb{Q}}
\def\GQQ{\mathbb{GQ}}
 \newtheorem{thm}{Theorem}[section]
 \newtheorem{lem}[thm]{Lemma}
 \newtheorem{claim}[thm]{Claim}
 \theoremstyle{definition}
 \theoremstyle{remark}
 \numberwithin{equation}{section}
\begin{document}

%
%
%
%
%
%
%
%
%

\title[Equivalence of the Generalized Zhang-Zhang Polynomial...]
 {Equivalence of the Generalized Zhang-Zhang Polynomial and the Generalized Cube Polynomial}

\author[Petra \v Zigert Pleter\v sek]{Petra \v Zigert Pleter\v sek}

\address{%
Faculty of Chemistry and Chemical Engineering, University of Maribor\\
Smetanova ulica 17, 2000 Maribor\\
Faculty of Natural Sciences and Mathematics, University of Maribor\\
Koro\v ska cesta 160, 2000 Maribor\\
Slovenia}

\email{petra.zigert@um.si}

\subjclass{Primary 05C31; Secondary 92E10}

\keywords{Zhang-Zhang polynomial, cube polynomial, benzenoid system, tubulene, fullerene, Clar cover}

\date{\today}

\begin{abstract}
In this paper we study the resonance graphs of benzenoid systems, tubulenes, and fullerenes. The resonance graph reflects the interactions between the Kekul\' e structures of a molecule. The equivalence of the Zhang-Zhang polynomial (which counts Clar covers) of the molecular graph and the cube polynomial (which counts hypercubes) of its resonance graph is known for all three families of molecular graphs. 

Instead of considering only interactions between 6-cycles (Clar covers), we also consider 10-cycles, which  contribute to the resonance energy of a molecule as well. Therefore, we generalize the concepts of the Zhang-Zhang polynomial and the cube polynomial and prove the equality of these two polynomials.
\end{abstract}

\maketitle
\section{Introduction}

 A benzenoid system is determined with all the hexagons lying inside cycle $C$ of the hexagonal lattice. They represent molecules called benzenoid hydrocarbons. These graphs are also known as the hexagonal systems and form one of the most extensively studied family of chemical graphs. For fundamental properties of benzenoid systems see \cite{gucy-89}, while some recent results can be found in \cite{kelenc,kovic-2014,trat,zhang-2014}. If we embed  benzenoid systems on a surface of a cylinder and join some edges we obtain structures called open-ended  single-walled carbon  nanotubes also called tubulenes. Carbon nanotubes are carbon compounds with a cylindrical structure and they were first observed in 1991  \cite{ii}. If we close a carbon nanotube with two caps composed of pentagons and hexagons, we obtain a fullerene. More exactly, a fullerene is a molecule of carbon in the form of a hollow sphere, ellipsoid, tube, or many other shapes. The first fullerene molecule was discovered $30$ years ago. In graph theory, a fullerene is a $3$-regular plane graph consisting only of pentagonal and hexagonal faces. The overview of some results on fullerene graphs can be found in \cite{andova}. Papers ~\cite{faghani,shi} present a sample of recent investigations.

The concept of the resonance graph appears quite naturally in the study of 
perfect matchings of molecular graphs of hydrocabons that represent Kekul\' e structures 
of corresponding hydrocarbon molecules. Therefore,
it is not surprising that it has been independently introduced in 
the chemical \cite{elba-93,grun-82}
as well as in the mathematical literature \cite{zhgu-88} (under the name
 $Z$-transformation graph) and then later rediscovered in \cite{rakl-96,rand-97}.

The equivalence of the Zhang-Zhang polynomial of the molecular graph and the cube polynomial of its resonance graph was established for benzenoid systems \cite{zhang-13}, tubulenes \cite{be-tr-zi}, and fullerenes \cite{tr-zi-2}. The Zhang-Zhang polynomial counts Clar covers with given number of hexagons, i.e.\,conjugated 6-cycles. For some recent research on the Zhang-Zhang polynomial see \cite{chou-2016,chou-2014} The resonance energy  is a theoretical quantity which is used for predicting the aromatic stability of conjugated systems. In the conjugated-circuit model, the resonance energy is determined with conjugated cycles of different lengths (see \cite{plavsic}), not only with 6-cycles. Among them, only 6-cycles and 10-cycles have uniquely determined structure. Therefore, we introduce the concept of the generalized Zhang-Zhang polynomial, which considers both of them. In this paper we prove the equivalence of the generalized Zhang-Zhang polynomial of a molecular graph and the generalized cube polynomial of the corresponding resonance graph. 

\section{Preliminaries}

A {\it benzenoid system} consists of a cycle $C$ of the infinite hexagonal lattice together with all hexagons inside $C$. A { \it benzenoid graph} is the underlying graph of a benzenoid system. 
\bigskip

\noindent
Next we formally define open-ended carbon nanotubes, also called {\em tubulenes} \cite{sa}. Choose any lattice point in the hexagonal lattice as the origin $O$. Let $\overrightarrow{a_1}$ and $\overrightarrow{a_2}$ be the two basic lattice vectors.
 Choose a vector $ \overrightarrow{OA} =n\overrightarrow{a_1}+m \overrightarrow{a_2}$
such that $n$ and $m$ are two integers and $|n|+|m|>1$, $nm\neq -1$. Draw two straight lines $L_1$ and $L_2$ passing through
$O$ and $A$ perpendicular to $O A$, respectively. By rolling up the hexagonal strip between $L_1$ and $L_2$ and gluing $L_1$ and $L_2$ such
that $A$ and $O$ superimpose, we can obtain a hexagonal tessellation $ \mathcal{HT}$ of the cylinder. $L_1$ and $L_2$ indicate the direction of
the axis of the cylinder. Using the terminology of graph theory, a {\em tubulene} $T$ is defined to be the finite graph induced by all
the hexagons of ${\mathcal HT}$ that lie between $c_1$ and $c_2$, where $c_1$ and $c_2$ are two vertex-disjoint cycles of ${\mathcal HT}$ encircling the axis of
the cylinder.  The vector $\overrightarrow{OA}$ is called the {\em chiral vector} of $T$ and  the cycles $c_1$ and $c_2$ are the two open-ends of $T$. 

\begin{figure}[!htb]
	\centering
		\includegraphics[scale=0.5, trim=0cm 0cm 1cm 0cm]{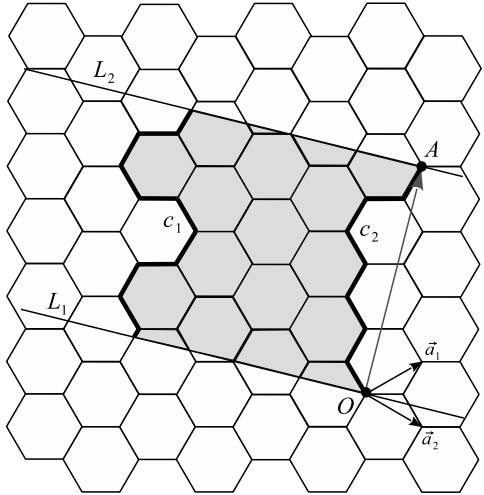}
\caption{Illustration of a $(4,-3)$-type tubulene.}
	\label{fig-nano}
\end{figure}

For any  tubulene $T$, if its chiral vector is $ n \overrightarrow{a_1} + m \overrightarrow{a_2}$, $T$ will be called an $(n,m)$-type tubulene, see Figure \ref{fig-nano}.
\bigskip

\noindent
A \textit{fullerene} $G$ is a {\color{black} $3$-connected 3-regular plane graph such that every face is bounded by either a pentagon or a hexagon}. By Euler's formula, it follows that 
the number of pentagonal faces of a fullerene is exactly $12$. 
\bigskip

\noindent
A {\em 1-factor} of a graph $G$ is a
spanning subgraph of $G$ such that every vertex has degree one. The edge set of a 1-factor is called a {\em perfect matching} of $G$, which is a set of independent edges covering all vertices of $G$. In chemical literature, perfect matchings are known as Kekul\'e structures (see \cite{gucy-89} for more details). 
Petersen's theorem states that {\color{black} every bridgeless $3$-regular graph always has a perfect matching} \cite{petersen}. Therefore, a fullerene always has at least one perfect matching. A hexagon of $G$ with exactly 3 edges in a perfect matching $M$ of $G$ is called a \textit{sextet}.

\bigskip

\noindent
Let $G$ be a benzenoid system, a tubulene or a fullerene with a perfect matching. The {\em resonance graph} $R(G)$ is the graph whose vertices are the  perfect matchings of $G$, and two perfect matchings are adjacent whenever
their symmetric difference forms a hexagon  of $G$. 
\bigskip

\noindent
The {\em hypercube} $Q_n$ of dimension $n$ is defined in the following way: 
all vertices of $Q_n$ are presented as $n$-tuples $(x_1,x_2,\ldots,x_n)$ where $x_i \in \{0,1\}$ for each $1\leq i\leq n$ 
and two vertices of $Q_n$ are adjacent if the corresponding $n$-tuples differ in precisely one coordinate.
\bigskip

\noindent A {\it convex subgraph} $H$ of a graph $G$ is a subgraph of $G$ such that every shortest path between two vertices of $H$ is contained in $H$.

\section{The generalized polynomials}

Let $G$ be a benzenoid system, a tubulene or a fullerene. A {\em Clar cover} is a spanning subgraph of $G$ such that every component of it is either $C_6$ or $K_2$. The {\em Zhang-Zhang polynomial}  of $G$ is defined in the following way:
$$ZZ(G,x)=\sum_{k \geq 0}z(G,k)x^k,$$
where $z(G,k)$ is the number of Clar covers of $G$ with $k$ hexagons.
\bigskip

\noindent
A {\em  generalized Clar cover} is a spanning subgraph of $G$ such that every component of it is either $C_6$, $C_{10}$ or $K_2$. See Figure \ref{clar_cover} for an example.

\begin{figure}[!htb]
	\centering
		\includegraphics[scale=0.7, trim=0cm 0cm 0cm 0cm]{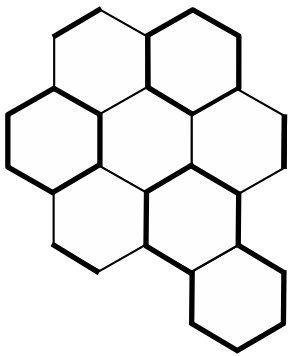}
\caption{A generalized Clar cover of a benzenoid system $G$.}
	\label{clar_cover}
\end{figure}

The {\em generalized Zhang-Zhang polynomial}  of $G$ is defined in the following way:
$$GZZ(G,x,y)=\sum_{k \geq 0, l \geq 0}gz(G,k,l)x^ky^l,$$
where $gz(G,k,l)$ is the number of generalized Clar covers of $G$ with $k$ cycles $C_{6}$ and $l$ cycles $C_{10}$.
 Note that for a graph $G$ 
number $gz(G,0,0)$ equals the number of  vertices of $R(G)$ and $gz(G, 1,0)$ equals the number of edges of $R(G)$. Furthermore, number $gz(G,k,0)$ represents the number of Clar covers with $k$ hexagons.
\bigskip

\noindent
Let $H$ be a graph. 
The {\em Cube polynomial} of $H$ is defined  as follows:
$$C(H,x)=\sum_{k\geq 0} \alpha_{k} (H)x^k,$$
where $\alpha_{k}(H)$ denotes the number of induced subgraphs of $H$ that are isomorphic to the $k$-dimensional hypercube.
\bigskip

\noindent
Let $G$ be a graph and $i \geq 1$ an integer. Then by $G^i$ we denote the Cartesian product of $i$ copies of $G$, i.e. $G^i = G \Box \cdots \Box G$. Also, $G^0 = K_1$. Furthermore, for any $k,l \geq 0$ we define $Q_{k,l} = P_2^k \Box P_3^l$, where $P_2$ and $P_3$ are paths on 2 and 3 vertices, respectively. Obviously, $Q_{k,0}$ is the $k$-dimensional hypercube. Moreover, if $k+l > 0$, vertices of the graph $Q_{k,l}$ can be presented as $(k+l)$-tuples $(b_1, \ldots, b_k, b_{k+1}, \ldots, b_{k+l})$, where $b_i \in \lbrace 0,1\rbrace$ if $i \in \lbrace 1, \ldots, k \rbrace$ and $b_i \in \lbrace 0,1, 2\rbrace$ if $i \in \lbrace k+1, \ldots, k+l \rbrace$. In such representation two vertices $(b_1, \ldots, b_k, b_{k+1}, \ldots, b_{k+l})$ and $(b_1', \ldots, b_k', b_{k+1}', \ldots, b_{k+l}')$ are adjacent if and only if there is $i \in \lbrace 1, \ldots, k+l\rbrace$ such that $|b_i - b_i'|= 1$ and $b_j = b_j'$ for any $j \neq i$.

\noindent
Let $H$ be a graph. 
The {\em generalized Cube polynomial} of $H$ is defined  as follows:
$$GC(H,x,y)=\sum_{k\geq 0, l\geq 0} \alpha_{k,l} (H)x^ky^l,$$
where $\alpha_{k,l}(H)$ denotes the number of induced convex subgraphs of $H$ that are isomorphic to the graph $Q_{k,l}$.

\section{The main result}

In this section we prove that the generalized Zhang-Zhang polynomial of every benzenoid system, tubulene or fullerene equals the generalized cube polynomial of its resonance graph. 

\begin{thm}
Let $G$ be a benzenoid system, a tubulene or a fullerene with a perfect matching.  Then the  generalized Zhang-Zhang polynomial of $G$ equals the generalized cube polynomial 
of its resonance graph $R(G)$, i.e.
    $$GZZ(G,x,y)=GC(R(G),x,y)\,.$$
    \label{main}
\end{thm}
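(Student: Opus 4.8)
The plan is to prove the identity coefficientwise: for every pair $k,l\ge 0$ I would exhibit a bijection between the generalized Clar covers of $G$ with exactly $k$ hexagons and $l$ cycles $C_{10}$ and the induced convex subgraphs of $R(G)$ isomorphic to $Q_{k,l}$, which gives $gz(G,k,l)=\alpha_{k,l}(R(G))$ and hence the theorem. The conceptual core is a local dictionary between a conjugated cycle of $G$ and its footprint in $R(G)$. A conjugated hexagon has exactly two alternating perfect matchings and thus behaves like a factor $P_2$. A conjugated $10$-cycle of $G$ is, as recalled in the introduction, uniquely a naphthalene-type unit --- the perimeter of two hexagons $h,h'$ sharing a single edge $e$ --- and it possesses exactly three Kekul\'e structures: the two alternations of the perimeter (avoiding $e$) and the matching using $e$. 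These three matchings induce in $R(G)$ a path $M'-M''-M'''\cong P_3$, where the middle matching $M''$ (the one using $e$) differs from each end by a single hexagon ($h$, resp.\ $h'$), while the two ends differ by the whole $C_{10}$ and are non-adjacent. Hence a $C_{10}$ behaves like a factor $P_3$.

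First I would set up the forward map. Given a generalized Clar cover with hexagons $h_1,\dots,h_k$, decacycles $C_1,\dots,C_l$ and fixed edges $F$, I form all perfect matchings of $G$ obtained by independently choosing one of the two alternations in each $h_i$, one of the three Kekul\'e structures in each $C_j$, and keeping $F$ fixed. As the components are pairwise vertex-disjoint, these local choices commute and have disjoint symmetric differences, and a direct check shows that two of the resulting matchings are adjacent in $R(G)$ precisely when they differ in a single coordinate by a single hexagon flip. The generated set therefore induces a subgraph isomorphic to $P_2^k\,\Box\,P_3^l=Q_{k,l}$, and I would further argue that it is convex in $R(G)$.

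Conversely, let $S$ be an induced convex subgraph of $R(G)$ isomorphic to $Q_{k,l}$. By uniqueness of the prime factorization with respect to the Cartesian product (Sabidussi--Vizing), the splitting of $S$ into $k$ factors $P_2$ and $l$ factors $P_3$ is canonical. Each $P_2$-factor records a flip of one fixed hexagon, hence a $C_6$ component. For a $P_3$-factor $M'-M''-M'''$ the two flips $M'\triangle M''$ and $M''\triangle M'''$ are hexagons $h,h'$; since the factor is a $P_3$ and not a $Q_2$, they must share exactly one edge, so $M'\triangle M'''=h\triangle h'$ is a single conjugated $10$-cycle, recorded as a $C_{10}$ component. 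The edges common to all matchings of $S$ furnish the $K_2$ components. Convexity ensures that distances inside $S$ agree with those in $R(G)$, so these symmetric differences compose correctly, the recorded hexagons and decacycles are pairwise disjoint and span $G$, and together they form a generalized Clar cover. The two constructions are mutually inverse, establishing the bijection.

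I expect the main obstacle to be the convexity statement and its role in the reconstruction: proving that every subgraph generated by a generalized Clar cover is convex, and conversely that a convex induced copy of $Q_{k,l}$ reassembles into a legitimate generalized Clar cover. For benzenoid systems this should be transparent, since $R(G)$ is a partial cube (in fact a median graph) into which $Q_{k,l}$ embeds isometrically; for tubulenes and fullerenes the distance structure of $R(G)$ is more delicate, and here I would rely on the structural properties of resonance graphs obtained in \cite{be-tr-zi} and \cite{tr-zi-2}, together with a verification that every conjugated $10$-cycle of $G$ is genuinely a two-hexagon naphthalene unit.
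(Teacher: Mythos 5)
Your proposal follows essentially the same route as the paper's proof: the same coefficientwise bijection, the same forward map sending a generalized Clar cover to the set of all compatible perfect matchings (inducing a convex copy of $Q_{k,l}=P_2^k\Box P_3^l$, with the naphthalene unit contributing the three-matching $P_3$ factor), and the same reconstruction of a cover from a convex induced $Q_{k,l}$, with convexity invoked exactly where the paper needs it---to force the two hexagons arising from a $P_3$-factor to share an edge rather than be disjoint. The only cosmetic differences are your appeal to Sabidussi--Vizing unique factorization (the paper simply fixes a coordinatization of $Q_{k,l}$ and applies the known 4-cycle lemma of Zhang--Shiu--Sun) and your packaging of bijectivity as mutually inverse constructions instead of separate injectivity and surjectivity lemmas.
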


\begin{proof}

Let $k$ and $l$ be nonnegative integers. For a graph $G$ we denote by $\GZZ(G,k,l)$ the set of all generalized Clar covers of $G$ with exactly $k$ cycles $C_6$ and $l$ cycles $C_{10}$. On the other hand, consider a graph $H$;  the set of induced convex  subgraphs of $H$ that are isomorphic to a graph $Q_{k,l}$ is denoted by $\GQQ_{k,l}(H)$. Let us define a mapping $f_{k,l}$ from the set of generalized Clar covers of $G$ with $k$ cycles $C_6$ and $l$ cycles $C_{10}$ to the set of 
induced convex subgraphs of the resonance graph $R(G)$ isomorphic to the graph $Q_{k,l}$
$$f_{k,l}:\, \GZZ(G,k,l) \longrightarrow \GQQ_{k,l}(R(G))$$
in the following way:  for a generalized Clar cover  $C \in \GZZ(G,k,l) $ consider all perfect matchings
$M_1$, $M_2$, $\ldots$, $M_i$ of $G$ such that:
\begin{itemize}
\item  if cycle $C_6$ in $C$, then $|M_j \cap E(C_6)| =3$  for all $j=1,2, \ldots,i$,
\item if cycle $C_{10}$ of $C$ is composed of two hexagons, $h_1$ and $h_2$, then $|M_j \cap (E(h_1) \cup E(h_2))| = 5$ for all $j=1,2, \ldots,i$,
\item each isolated edge of $C$ is in $M_j$ for all $j=1,2, \ldots,i$.
\end{itemize}
Finally, assign $f_{k,l}(C)$  as an induced subgraph of $R(G)$ with vertices $M_1$, $M_2$,$\ldots$, $M_i$.

Note first that in case when  $k=0$ and $l=0$ generalized Clar covers  are the perfect matchings of $G$ and if $C$ is such a generalized Clar cover then $f_{k,l}(C)$ is a vertex of the resonance graph and the mapping is obviously bijective. So from now on at least one of $k$ and $l$ will be positive. We first show that $f_{k,l}$ is a well-defined function.

\begin{lem}
For each generalized Clar cover $C \in \GZZ(G,k,l) $ it follows that $f_{k,l}(C) \in \QQ_{k,l}(R(G))$.
\label{lema1}
\end{lem}
\begin{proof} First we show that $f_{k,l}(C)$ is isomorphic to the graph $Q_{k,l}$. Let
$c_1$, $c_2$,$\ldots$,$c_k$ be the hexagons of $C$ and let $c_{k+1}, \ldots, c_{k+l}$ be cycles $C_{10}$ that are in $C$. 
Obviously, every hexagon of $C$ has two possible perfect matchings. Let us call these ``possibility 0" and ``possibility 1". Moreover, for every cycle $C_{10}$ in $C$ we obtain tree possible perfect matchings of graph $\langle V(C_{10})\rangle$, which will be denoted as ``possibility 0", ``possibility 1", and ``possibility 2". Also, if cycle $C_{10}$ is composed of hexagons $h_1$ and $h_2$, ``possibility 1" denotes the perfect matching containing the common edge of $h_1$ and $h_2$. 

For any vertex $M$ of $f_{k,l}(C)$ let 
$b(M) =(b_1,b_2,\ldots, b_k, b_{k+1}, \ldots, b_{k+l})$, where $b_j=i$ if on $c_j$ possibility $i$ is selected.
It is obvious that $b:V(f_{k,l}(C))\rightarrow V(Q_{k,l})$ is a bijection. Let $b(M')=(b_1',b_2',\ldots, b_k', b_{k+1}', \ldots, b_{k+l}')$ for $M' \in V(f_{k,l}(C))$. If $M$ and $M'$ are adjacent in $f_{k,l}(C)$, then $M\oplus M'=E(h)$ for a hexagon $h$ of some $c_i$, where $1 \leq i \leq k+l$. Therefore, $b_j=b_j'$ for each $j \neq i$ and $|b_i - b_i'| = 1$, which implies
 that $b(M)$ and $b(M')$ are adjacent in $Q_{k,l}$. Conversely, if
  $(b_1,b_2,\ldots, b_k, b_{k+1}, \ldots, b_{k+l})$ and $(b_1',b_2',\ldots, b_k', b_{k+1}', \ldots, b_{k+l}')$ are adjacent in $Q_{k,l}$, it follows that $M$ and 
  $M'$ are adjacent in $f_{k,l}(C)$. Hence $b$ is an isomorphism between $f_{k,l}(C)$ and $Q_{k,l}$.
  
To complete the proof we have to show that $f_{k,l}(C)$ is a convex subgraph of $R(G)$. Therefore, let $M$ and $M'$ be two vertices of $f_{k,l}(C)$. Obviously, perfect matchings $M$ and $M'$ can differ only in the edges of hexagons that belong to cycles of $C$. Therefore, any shortest path between $M$ and $M'$ in $R(G)$ contains perfect matchings that are vertices of $f_{k,l}(C)$. It follows that $f_{k,l}(C)$ is convex in $R(G)$.
\end{proof}

\noindent
The following lemma shows that $f_{k,l}$ is injective.

\begin{lem}
The mapping  $f_{k,l}:\, \GZZ(G,k,l) \longrightarrow \QQ_{k,l}(R(G))$ is injective for any integers $k,l$.
\label{lema2}
\end{lem}
\begin{proof} Let $C$ and $C'$ be distinct generalized Clar covers in $\GZZ(G,k,l)$. If $C$ and $C'$ contain the same set of cycles, then the isolated edges of $C$ and $C'$ are distinct. Therefore, $f_{k,l}(C)$ and $f_{k,l}(C')$ are disjoint induced subgraphs of $R(G)$ and thus $f_{k,l}(C)\neq f_{k,l}(C')$. Therefore, suppose that $C$ and $C'$ contain different sets of cycles. Without loss of generality we can assume that there is hexagon $h$ such that $h$ has at least five edges in $C$ and $h$ has at most three edges in $C'$. Hence  at least one edge $e$ of $h$ does not belong to $C'$. From the definition of the function $f_{k,l}$, $e$ is thus unsaturated by those perfect matchings that correspond to the vertices in $f_{k,l}(C')$. However, there obviously exists perfect matching $M \in V(f_{k,l}(C))$ such that $e \in M$. As a result, $M \notin V (f_{k,l}(C'))$ and $f(C) \neq f(C')$.
\end{proof}

\noindent
The next lemma was proved in \cite{zhang-13} for benzenoid systems. The same proof can be applied in the case of tubulenes or fullerenes.
\begin{lem} \cite{zhang-13}
\label{stiri}
Let $G$ be a benzenoid systems, a tubulene, or a fullerene with a perfect matching. If the resonance graph $R(G)$ contains a 4-cycle $M_1M_2M_3M_4$,
then $h = M_1\oplus M_2$ and $h' = M_1\oplus M_4$ are disjoint hexagons. Also, we have $h = M_3\oplus M_4$
and $h' = M_2\oplus M_3$.
\end{lem}

\noindent
The following lemma shows that $f_{k,l}$ is surjective.

\begin{lem}
The mapping  $f_{k,l}:\, \GZZ(G,k,l) \longrightarrow \QQ_{k,l}(R(G))$ is surjective for any integers $k,l$.
\label{lema3}
\end{lem}

\begin{proof}
Let $k,l$ be integers and $Q \in \mathbb{Q}_{k,l}(R(G))$. Then the vertices of $Q$ can be identified with strings $(b_1, \ldots, b_k, b_{k+1}, \ldots, b_{k+l})$, where $b_i \in \lbrace 0,1\rbrace$ if $i \in \lbrace 1, \ldots, k \rbrace$ or $b_i \in \lbrace 0,1, 2\rbrace$ if $i \in \lbrace k+1, \ldots, k+l \rbrace$, so that two vertices of $Q$ are adjacent in $Q$ if and only if their strings $b$ and $b'$ differ in precisely one position $i$, such that $|b_i-b_i'|=1$. 

Let $M = (0,0,0, \ldots, 0)$, $N^1 = (1,0,0, \ldots, 0)$, $N^2 = (0,1,0, \ldots, 0)$, \ldots, $N^{k+l} = (0,0,0, \ldots, 1)$ be the vertices of $Q$. It is obvious that $MN^i$ is an edge of $R(G)$ for every $i, 1 \leq i \leq k+l$. By definition of $R(G)$, the symmetric difference of  perfect matchings  $M$ and $N^i$ is the edge set of a  hexagon of $G$. We denote this hexagon by $h_i$ and we obtain the set of hexagons $\lbrace h_1, \ldots, h_{k+l} \rbrace$ of graph $G$. If two of these hexagons were the same, for example if $h_i=h_j$ for $i,j \in \lbrace 1, \ldots, k+l \rbrace$ and $i \neq j$, then $N^i = N^j$ - a contradiction. Hence, we have the set of $k+l$ distinct hexagons. In the next claim we show that these hexagons are pairwise disjoint.

\begin{claim}
The hexagons $h_i$, $1 \leq i \leq k+l$, are pairwise disjoint.
\end{claim}
\begin{proof}
Let $i,j \in \lbrace 1, \ldots, k+l \rbrace$ and $i \neq j$. Let $W$ be a vertex of $Q$ having exactly two $1$'s (and these are in the $i$th and $j$th position) and $0$ at every other position. Obviously, $MN^iWN^j$ is a 4-cycle and therefore, by Lemma \ref{stiri}, $h_i$ and $h_j$ are disjoint hexagons. 
\end{proof}

Next, we consider the vertices $O^{i}$, $i \in \lbrace k+1, \ldots, k+l\rbrace$, such that $O^i$ has $2$ in the $i$th position and $0$ in every other position. Obviously, $N^iO^i$ is the edge of $R(G)$ for any $i \in \lbrace k+1, \ldots, k+l\rbrace$. Let $h_i'$ be the hexagon of $G$ corresponding to the edge $N^iO^i$.

\begin{claim}
If $i \in \lbrace k+1, \ldots, k+l\rbrace$, the hexagon $h_i'$ has exactly one common edge with $h_i$.
\end{claim}

\begin{proof}
It is easy to see that $h_i \neq h_i'$ (otherwise $M=O^i$). Therefore, suppose that $h_i$ and $h_i'$ are disjoint. Since they are both sextets in the perfect matching $N^i$, there is a vertex $X$ of $R(G)$, $X \neq N^i$, which is adjacent to $M$ and $O^i$. If $X \in V(Q)$, the string of $X$ must differ from $M$ for $1$ in exactly one position and must differ from $O^i$ for $1$ in exactly one position, which means $X = N^i$ - a contradiction. Therefore, $X$ is not in $Q$. Since $MXO^i$ is a shortest path between $M$ and $O^i$, $Q$ is not convex subgraph of $R(G)$, which is a contradiction. Hence, $h_i$ and $h_i'$ have exactly one common edge. 
\end{proof}

\begin{claim}
Let $i \in \lbrace k+1, \ldots, k+l\rbrace$. Then the hexagon $h_i'$ is disjoint with every $h_j$, $j \in \lbrace 1, \ldots, k+l \rbrace \setminus \lbrace i\rbrace$.
\end{claim}

\begin{proof}
Let $X$ be a vertex in $Q$ with $2$ in the $i$th position, $1$ in the $j$th position and $0$ in every other position. Furthermore, let $Y$ be a vertex in $Q$ with $1$ in the $i$th position, $1$ in the $j$th position and $0$ in every other position. Obviously, $h_i' \neq h_j$ (otherwise $O^i=Y$, which is a contradiction).  Since $N^iO^iXY$ is a $4$-cycle such that $h_i'$ corresponds to the edge $N^iO^i$ and $h_j$ corresponds to the edge $N^iY$, it follows from Lemma \ref{stiri} that hexagons $h_i'$ and $h_j$ are disjoint.

\end{proof}

\begin{claim}
Let $i \in \lbrace k+1, \ldots, k+l\rbrace$. Then the hexagon $h_i'$ is disjoint with every $h_j'$, $j \in \lbrace k+1, \ldots, k+l\rbrace \setminus \lbrace i \rbrace$.
\end{claim}

\begin{proof}
Define the following vertices in $Q$:
\begin{itemize}
\item $X_1$ has $1$ in the $i$th position, $1$ in the $j$th position and $0$ in every other position,
\item $X_2$ has $1$ in the $i$th position, $2$ in the $j$th position and $0$ in every other position,
\item $X_3$ has $2$ in the $i$th position, $1$ in the $j$th position and $0$ in every other position,
\item $X_4$ has $2$ in the $i$th position, $2$ in the $j$th position and $0$ in every other position.
\end{itemize}

Using Lemma \ref{stiri} we can easily see that hexagon $h_j'$ corresponds to the edge $X_1X_2$ and hexagon $h_i'$ corresponds to the edge $X_1X_3$. Since $X_1X_2X_4X_3$ is a $4$-cycle in the resonance graph, Lemma \ref{stiri} again implies that $h_i'$ and $h_j'$ are disjoint and the proof is complete. 
\end{proof}

Let $C_i$, $i \in \lbrace k+1, \ldots, k+l \rbrace$ be a $10$-cycle formed by $h_i$ and $h_i'$. Moreover, let $C$ be a spanning subgraph of $G$ such that $E(C) = M \cup  E(h_{1}) \cup \ldots \cup E(h_{k}) \cup E(C_{k+1}) \cup \ldots E(C_{k+l})$. Therefore, $C$ is a generalized Clar cover with $k$ hexagons and $l$ 10-cycles. It is obvious that every edge in $Q$ corresponds to some hexagon $h_i$, $i \in \lbrace 1, \ldots, k+l \rbrace$ or $h_i'$, $i \in \lbrace k+1, \ldots, k+l \rbrace$. Therefore, $V(f_{k,l}(C))=V(Q)$. Since both $Q$ and $f_{k,l}(C)$ are induced subgraphs of the resonance graph, it follows $f_{k,l}(C)=Q$. 
\end{proof}

We have proved that $f_{k,l}$ is bijective function and hence, $|\GZZ(G,k,l)| = |\GQQ_{k,l}(R(G))|$. Therefore, the proof is complete.

\end{proof}

\section{An example}

In this final section we give an example of a benzenoid system $G$ and calculate the generalized Zhang-Zhang polynomial of $G$, i.e. the generalized cube polynomial of the resonance graph of $G$. See Figures \ref{ben_sistem} and \ref{resonancni}. 

\begin{figure}[!htb]
	\centering
		\includegraphics[scale=0.7, trim=0cm 0cm 0cm 0cm]{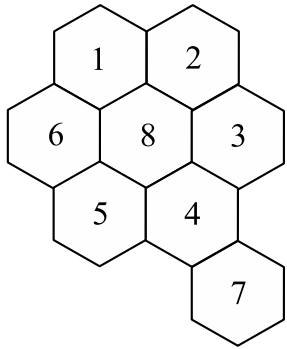}
\caption{Benzenoid system $G$.}
	\label{ben_sistem}
\end{figure}

\begin{figure}[!htb]
	\centering
		\includegraphics[scale=0.7, trim=0cm 0cm 0cm 0cm]{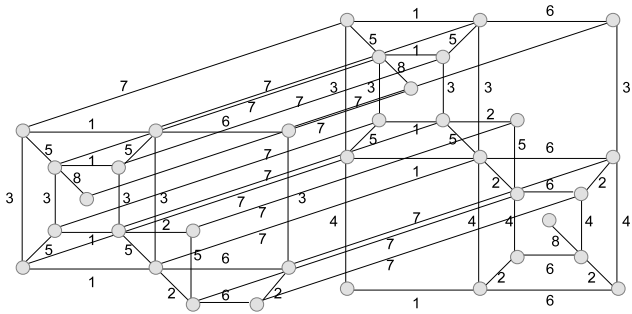}
\caption{Resonance graph $R(G)$.}
	\label{resonancni}
\end{figure}

The polynomials are
$$GZZ(G,x,y)=GC(R(G),x,y)= $$ $$=34+53x+35x^2+12x^3+x^4+48y+7y^2+37xy+xy^2+3x^2y\,.$$
For example, the coefficient in front of $x^2y$ is 3, since there are 3 generalized Clar covers in $G$ with two $C_6$ and one $C_{10}$. On the other hand, this coefficient counts the number of induced convex subgraphs of $R(G)$ isomorphic to the graph $P_2^2 \Box P_3$. 

\section*{Acknowledgment}

Supported in part by the Ministry of Science of Slovenia under grant $P1-0297$.


\end{document}